\newtheorem{theorem}{Theorem}[section]
\newtheorem{proposition}[theorem]{Proposition}
\theoremstyle{definition}
\theoremstyle{remark}
\numberwithin{equation}{section}
\newcommand{\set}[1]{\left\{#1\right\}}
\newcommand{\comb}[2]{\left( \begin{array}{c} #1 \\ #2 \end{array} \right)}
\newcommand{\C}{\mathbb C}
\newcommand{\Z}{\mathbb Z}
\newcommand{\BB}{{\mathcal B}}
\newcommand{\PP}{{\mathcal P}}
\newcommand{\HH}{{\mathcal H}}
\newcommand{\EE}{{\mathcal E}}
\begin{document}

\title[]
{On the leading eigenvalue of transfer operators of the Farey map with real temperature}%
\author{S. Ben Ammou} 
\address{Faculty of Science, Computational Mathematics Laboratory, University of Monastir, Monastir 5000, Tunisia}
\author{C. Bonanno} 
\address{Dipartimento di Matematica, Universit\`a di Pisa, Pisa, Italy}
\email{bonanno@dm.unipi.it}
\author{I. Chouari}
\address{Faculty of Science, Computational Mathematics Laboratory, University of Monastir, Monastir 5000, Tunisia}
\author{S. Isola}
\address{Dipartimento di Matematica e Informatica, Universit\`a di Camerino, Camerino (MC), Italy}
\thanks{The second author is partially supported by ``Gruppo Nazionale per l'Analisi Matematica, la Probabilit\`a e le loro Applicazioni (GNAMPA)'' of Istituto Nazionale di Alta Matematica (INdAM), Italy.}
\maketitle

\begin{abstract}
We study the spectral properties of a family of generalized transfer operators associated to the Farey map. We show that when acting on a suitable space of holomorphic functions, the operators are self-adjoint and the positive dominant eigenvalue can be approximated by means of the matrix expression of the operators.
\end{abstract}
\maketitle

\section{Introduction} \label{inizio}
Let $F:[0,1]\to [0,1]$ be the \emph{Farey map} defined by
\begin{equation} \label{farey}
F(x)=\left\{
\begin{array}{ll}
\frac{x}{1-x} & \mbox{if }\ 0\le x\le \frac{1}{2}\\[0.3cm]
\frac{1-x}{x} & \mbox{if }\ \frac{1}{2} \le x \le 1
\end{array} \right.
\end{equation}
Its name comes from the relations with the \emph{Farey fractions} as studied for example in \cite{CM}, and it is a particularly interesting map to study from the point of view of Ergodic Theory, also for applications to number theory (see e.g. \cite{Is2}).

The Farey map is an example of so-called \emph{intermittent maps} with an infinite absolutely continuous invariant measure (see e.g. \cite{GW,LSV}). A map is \emph{intermittent} if roughly speaking, its orbits alternate regular and chaotic behavior. In the maps $T:[0,1]\to[0,1]$ studied in \cite{GW,LSV}, this is caused by the existence of a non-hyperbolic fixed point of the map $T$ at the origin, that is $T(0)=0$ and $T'(0)=1$, with $T'(x)>1$ for all $x\not= 0$. Many ergodic properties fail for intermittent maps, for example the decay of correlations occur at a polynomial rate instead of the exponential rate typical of expanding maps, for which $T'(x)\ge c >1$ for all $x$. Moreover if $T'(x)= 1+\alpha x^{\alpha-1}+o(x^{\alpha-1})$ as $x\to 0$ for $\alpha>0$, then the unique invariant measure $\mu$, which is absolutely continuous with respect to the Lebesgue measure, is finite if and only if $\alpha<2$. Hence for $\alpha\ge 2$ we have to apply Infinite Ergodic Theory (\cite{AA}), and the analysis is more delicate. The Farey map corresponds to the case $\alpha=2$ with invariant measure $\mu(dx) = \frac 1x dx$.

For intermittent maps with infinite invariant measures, one of the problem is the study of the transfer operators associated to the map. This is a family of linear operators depending on a parameter $q\in \C$, that when defined on a suitable space of functions are useful in the study of ergodic properties. For example for $q=1$ one obtains information about the existence of invariant measures, ergodicity and mixing (\cite{Bal}). However the situation is much more delicate for systems with an infinite invariant measure.

In this paper we continue the study initiated in \cite{BGI,BI} of the transfer operators associated to the Farey map \eqref{farey}. In particular we consider the family of \emph{signed
generalized transfer operators} $\PP_q^{\pm}$ whose action on a function $f(x) :
[0,1]\to \C$ is given by a weighted sum over the values of $f$ on
the set $F^{-1}(x)$, namely letting
\[
(\PP_{0,q} f)(x) := \left(\frac{1}{x+1}\right)^{2q} \ f\left(
\frac{x}{x+1}\right)
\]
\[
(\PP_{1,q} f)(x) := \left(\frac{1}{x+1}\right)^{2q} \ f\left(
\frac{1}{x+1}\right)
\]
we have
\begin{equation} \label{transfer-farey}
(\PP_q^{\pm} f)(x) := (\PP_{0,q} f)(x) \pm (\PP_{1,q} f)(x)
\end{equation}
Since the operators $\PP_{q}^{\pm}$ are defined by multiplication and composition by real analytic maps which extend to holomorphic maps on a neighbourhood of the interval $[0,1]$, it is natural to consider the action of $\PP_{q}^{\pm}$ on the space of holomorphic functions on the open domain 
\begin{equation} \label{ball}
B:= \set{x\in \C\ :\ \left| x-\frac 1 2 \right| < \frac 1 2}.
\end{equation}
The study of the spectral properties of the operators $\PP_{q}^{\pm}$ has been considered in \cite{BGI,Is,Pr} for real values of $q$ and in \cite{BI} for complex $q$ with $\Re(q)>0$. In particular in \cite{BI}, the operators $\PP_q^{\pm}$ have been used for the extension of the results in \cite{LZ,Ma} to a two-variable Selberg zeta function. 

Here we consider the case $q\in (0,\infty)$ and pursue the analysis of the spectral properties of $\PP_q^{\pm}$ when acting on a suitable Hilbert space. In Section \ref{sec:spectrum} we first revisit results from \cite{BGI,BI}, amending in Proposition \ref{spazi} and Corollary \ref{new-correct} some results from \cite{BGI}. Then in Section \ref{sec:matrix} we introduce an infinite matrix associated to $\PP_q^{\pm}$, and use a result from \cite{K1} to obtain an algorithm for the determination of the leading eigenvalue of $\PP_{q}^{\pm}$. Compared to similar results in \cite{Pr}, our approach is direct, in the sense that we don't need to use an induced map, and the eigenvalue can be found with arbitrary accuracy.

\section{The spectrum of $\PP_{q}^{\pm}$ for real positive $q$} \label{sec:spectrum}

In \cite{BGI} the spectral properties of $\PP_{q}^{\pm}$ are studied on the Hilbert space $\HH_{q}$ of complex functions which can be represented in terms of the integral transform
\[
L^{2}(m_{q}) \ni \phi(t) \mapsto {\BB}_q [\phi](x):= \frac{1}{x^{2q}}\ \int_0^\infty
e^{-\frac{t}{x}}\, e^t\, \phi (t)\, m_q(dt)
\]
where $m_{q}$ is the absolutely continuous measure on $(0,\infty)$
defined as 
\[ 
m_q(dt)= t^{2q-1}\, e^{-t}\, dt.
\]
Hence the operators $\PP_{q}^{\pm}$ are studied on the space
\[
\HH_{q} := \set{ f(x) = {\BB}_q [\phi](x)\ :\ \phi(t) \in L^{2}(m_{q})}
\]
endowed with the inner product
\[
(f_1,f_2) := \int_0^\infty \varphi_1(t)\, \overline{\varphi_2
(t)}\, m_q(dt) \quad \mbox{if }\ f_i=\BB_q [\varphi_i]
\]
which makes $\HH_{q}$ a Hilbert space. In \cite{BI} it is shown that $\HH_q$ is naturally continuously embedded in $\HH(B)$, where $B$ is as in \eqref{ball} and $\HH(B)$ is the space of holomorphic functions endowed with the standard topology induced by the family of supremum norms on compact subsets of $B$. A first result from \cite{BGI} states

\begin{theorem}[\cite{BGI}] \label{lilla}
For $q\in (0,\infty)$ the space $\HH_q$ is invariant for
$\PP_q^{\pm}$ and $\PP_q^\pm:\HH_q\to \HH_q$ are isomorphic to self-adjoint compact perturbations of the multiplication operator $M:{\rm L^2}(m_q)\to {\rm L^2}(m_q)$
given by
$$(M\varphi) (t)= e^{-t} \, \varphi(t)$$ More specifically
$$\PP_q^{\pm}\, \BB_q\, [\varphi] = \BB_q\, [P_{q}^\pm  \varphi]$$
where $P_{q}^\pm=M\pm N_{q}$ and $N_{q}: {\rm L^2}(m_q)\to {\rm L^2}(m_q)$ is
the symmetric integral operator given by
$$(N_{q} \varphi) (t) = \int_0^\infty \frac{J_{2q-1} \left( 2 \sqrt{st}
\right)}{(st)^{q- 1/2}}\ \varphi(s) \, m_q(ds)$$ where $J_p$ denotes
the Bessel function of order $p$.
\end{theorem}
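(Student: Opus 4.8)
The plan is to transport everything to $L^2(m_q)$ through the transform $\BB_q$. As established in \cite{BGI,BI}, $\BB_q$ is an isometric isomorphism of $L^2(m_q)$ onto $\HH_q$: it is isometric by the very definition of the inner product on $\HH_q$, and it is injective because if $\BB_q[\varphi]\equiv 0$ on $B$, then, writing $w=1/x$ (so that $w$ ranges over the half-plane $\set{\Re w>1}$, the image of $B$ under $x\mapsto 1/x$), the Laplace transform $\int_0^\infty e^{-wt}\,\varphi(t)\,t^{2q-1}\,dt$ vanishes identically there, which forces $\varphi=0$ in $L^2(m_q)$. Thus it suffices to exhibit a bounded operator $P_q^\pm$ on $L^2(m_q)$ with $\PP_q^\pm\,\BB_q=\BB_q\,P_q^\pm$ and to read off its structure.

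First I would treat the two inverse branches separately. For $\PP_{0,q}$, inserting $f=\BB_q[\varphi]$ and using $\bigl(\tfrac{x}{x+1}\bigr)^{-1}=1+\tfrac1x$, the prefactor $(x+1)^{-2q}$ cancels against $\bigl(\tfrac{x}{x+1}\bigr)^{-2q}$ and, after a one-line change of variables, one obtains
\[
(\PP_{0,q}\BB_q[\varphi])(x)=\frac{1}{x^{2q}}\int_0^\infty e^{-t/x}\,\varphi(t)\,m_q(dt)=\BB_q[M\varphi](x),\qquad (M\varphi)(t)=e^{-t}\varphi(t).
\]
For $\PP_{1,q}$, using $\bigl(\tfrac{1}{x+1}\bigr)^{-1}=x+1$ the prefactors cancel again and the same manipulation gives
\[
(\PP_{1,q}\BB_q[\varphi])(x)=\int_0^\infty e^{-tx}\,\varphi(t)\,m_q(dt),
\]
the integral converging absolutely for $x\in B$ by Cauchy--Schwarz, since $\Re x>0$ there.

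The core of the argument is to recognise this last expression as $\BB_q$ applied to the Bessel operator $N_q$. I would expand
\[
\BB_q[N_q\varphi](x)=\frac{1}{x^{2q}}\int_0^\infty e^{-t/x}\,t^{2q-1}\Bigl(\int_0^\infty \frac{J_{2q-1}(2\sqrt{st})}{(st)^{q-1/2}}\,\varphi(s)\,m_q(ds)\Bigr)dt,
\]
justify interchanging the order of integration by means of the bounds $J_\nu(z)=O(z^{\nu})$ as $z\to0$ and $J_\nu(z)=O(z^{-1/2})$ as $z\to\infty$ together with the exponential weights in $m_q$, and then evaluate the inner integral. Writing $\nu=2q-1$ and substituting $t=u^2$, it becomes $2s^{-\nu/2}\int_0^\infty e^{-u^2/x}\,u^{\nu+1}J_\nu(2\sqrt s\,u)\,du$, to which the classical identity $\int_0^\infty e^{-pu^2}u^{\nu+1}J_\nu(au)\,du=\frac{a^\nu}{(2p)^{\nu+1}}\,e^{-a^2/(4p)}$ (valid here because $\Re(1/x)>0$ and $\nu>-1$) applies with $p=1/x$ and $a=2\sqrt s$, producing exactly $x^{2q}e^{-sx}$. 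Feeding this back yields $\BB_q[N_q\varphi](x)=\int_0^\infty e^{-sx}\,\varphi(s)\,m_q(ds)=(\PP_{1,q}\BB_q[\varphi])(x)$. Hence $\PP_q^\pm\,\BB_q[\varphi]=\BB_q[(M\pm N_q)\varphi]$, and in particular $\HH_q$ is invariant for $\PP_q^\pm$.

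It then remains to record the structural facts. The operator $M$ is multiplication by the real function $e^{-t}\in(0,1]$, hence bounded, positive and self-adjoint on $L^2(m_q)$. The kernel $K(s,t)=J_{2q-1}(2\sqrt{st})\,(st)^{1/2-q}$ is real and symmetric in $(s,t)$, so $N_q$ is symmetric, and it is Hilbert--Schmidt because
\[
\iint_{(0,\infty)^2} K(s,t)^2\,m_q(ds)\,m_q(dt)=\iint_{(0,\infty)^2} J_{2q-1}\bigl(2\sqrt{st}\bigr)^{2}\,e^{-s-t}\,ds\,dt<\infty
\]
by the same Bessel asymptotics; therefore $N_q$ is bounded, self-adjoint and compact. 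Consequently $\PP_q^\pm$ is isomorphic, via the unitary $\BB_q$, to $P_q^\pm=M\pm N_q$, a self-adjoint compact perturbation of the self-adjoint multiplication operator $M$. I expect the only genuinely delicate points to be the evaluation of the Bessel integral and the Fubini justification surrounding it; everything else is bookkeeping.
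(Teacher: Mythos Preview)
The paper does not give its own proof of this theorem: it is quoted verbatim from \cite{BGI} and only the statement is reproduced here. So there is no ``paper's proof'' to compare against in this text.

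That said, your strategy is correct and is essentially the computation carried out in \cite{BGI}. The two branch identities $\PP_{0,q}\BB_q[\varphi]=\BB_q[M\varphi]$ and $\PP_{1,q}\BB_q[\varphi](x)=\int_0^\infty e^{-sx}\varphi(s)\,m_q(ds)$ follow exactly as you indicate, and the Weber--Schafheitlin integral $\int_0^\infty e^{-pu^2}u^{\nu+1}J_\nu(au)\,du=\dfrac{a^\nu}{(2p)^{\nu+1}}e^{-a^2/(4p)}$ (valid for $\Re p>0$, $\Re\nu>-1$) is precisely the identity that converts the latter into $\BB_q[N_q\varphi]$. Your Hilbert--Schmidt bound is also right: after the measure factors cancel one is left with $\iint J_{2q-1}(2\sqrt{st})^2 e^{-s-t}\,ds\,dt$, and the asymptotics $J_\nu(z)=O(z^\nu)$ near $0$ and $O(z^{-1/2})$ near $\infty$ make this finite for every $q>0$. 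The only place I would tighten the write-up is the Fubini step: for $0<q<\tfrac12$ the kernel $K(s,t)$ is not globally bounded, so one really has to split at $2\sqrt{st}=1$ and use Cauchy--Schwarz against $\|\varphi\|_{L^2(m_q)}$ on the large-argument piece, exactly as your outline suggests but does not spell out.
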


Moreover we remark that in \cite{BI} it is shown that the restriction of the operators $\PP_{q}^{\pm}$ to the space $\HH_{q}$ is natural for all complex $q$ with $\Re(q)>0$, since 

\begin{proposition}[\cite{BI}] \label{autofunz}
The eigenfunctions $f$ of $\PP_{q}^{\pm}$ with eigenvalue $\lambda \in \C \setminus
[0,1)$ have the form
\[
f(x) = \frac{c\, \lambda^{\frac 1 x}}{x^{2q}} +
\frac{\Gamma(2q-1)}{\Gamma(2q)}\, \frac b x + \BB_q[\phi]
\]
with $c,b \in \C$ and $\phi \in L^2(m_{_{\Re(q)}})$ with $\phi(0)$ finite and $\phi(t) = \phi(0) + O(t)$ as $t\to 0^{+}$. Moreover if $\lambda
\not= 1$ then $b=0$. Instead if $\lambda =1$ then $b = f(1)$, and $b=f(1)=0$ if $f$ is an eigenfunction of $\PP_{q}^{-}$. Finally the function $\phi \in L^2(m_{_{\Re(q)}})$ is such that $\BB_q[\phi]$ is bounded as $x\to 0$.
\end{proposition}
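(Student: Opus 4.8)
The plan is to conjugate the eigenvalue equation into a linear difference equation by using the Fatou coordinate of the parabolic inverse branch $\psi_0(x):=\frac{x}{x+1}$ (the other branch being $\psi_1(x):=\frac1{x+1}$). Since $w=1/x$ conjugates $\psi_0$ to the translation $w\mapsto w+1$, I set
\[
G(w):=w^{-2q}\,f\!\left(\tfrac1w\right),
\]
which is holomorphic on the half-plane $\Pi:=\{\Re w>1\}$, the image of $B$ under $x\mapsto 1/x$ (the boundary point $0\in\partial B$ going to $\infty$ and $1\in\partial B$ staying fixed). Writing $\epsilon=\pm1$ for the sign, a direct substitution turns $\PP_q^\pm f=\lambda f$ into
\begin{equation}
G(w+1)+\epsilon\,w^{-2q}\,G\!\left(1+\tfrac1w\right)=\lambda\,G(w),\qquad w\in\Pi,\ \ \epsilon=\pm1.\tag{$\star$}
\end{equation}
The dictionary for the three model functions, as $w\to\infty$, i.e.\ $x\to0$, is $\frac{c\lambda^{1/x}}{x^{2q}}\leftrightarrow c\,\lambda^{w}$, $\frac1x\leftrightarrow w^{1-2q}$, and $\BB_q[\phi](x)\leftrightarrow G_\phi(w):=\int_0^\infty e^{-tw}\phi(t)\,t^{2q-1}\,dt$, whose behaviour at $w=\infty$ is controlled, via Watson's lemma, by the behaviour of $\phi$ at $t=0$. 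Thus the whole statement reduces to classifying the holomorphic solutions of $(\star)$ on $\Pi$ by their asymptotics at $w=\infty$.

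A preliminary point is that eigenfunctions extend holomorphically across $\partial B$ away from the parabolic point. Indeed $\psi_0(\overline B)$ and $\psi_1(\overline B)$ are closed discs contained in $\overline B$ meeting $\partial B$ only at $0$ and only at $1$ respectively, so the identity $f=\tfrac1\lambda(\PP_{0,q}f+\epsilon\PP_{1,q}f)$ writes $f$ near any boundary point $x_0\neq0$ (including $x_0=1$) in terms of its values on a compact subset of $B$; hence $f$, and therefore $G$, continues holomorphically across $\{\Re w=1\}$ near every finite point, and in particular $G$ is holomorphic at $w=1$. It follows that the inhomogeneous term $\Phi(w):=w^{-2q}G(1+\tfrac1w)$ in $(\star)$ has a full asymptotic expansion $\Phi(w)\sim\sum_{j\ge0}\frac{G^{(j)}(1)}{j!}\,w^{-2q-j}$ as $\Re w\to+\infty$. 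Evaluating $\PP_q^\pm f=\lambda f$ at $x=1$ gives furthermore $f(1)=\frac{1+\epsilon}{\lambda\,2^{2q}}f(\tfrac12)$, so $f(1)=0$ for $\PP_q^-$.

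For the classification I iterate $(\star)$ forward: $\lambda^{-n}G(w+n)=G(w)-\frac{\epsilon}{\lambda}\sum_{k=0}^{n-1}\lambda^{-k}\Phi(w+k)$. If $|\lambda|\ge1$ (and $\lambda\neq1$; when $|\lambda|=1$ one uses Dirichlet's test together with $\lambda\neq1$), the series $\Psi(w):=\sum_{k\ge0}\lambda^{-k}\Phi(w+k)$ converges, satisfies $\Psi(w+1)=\lambda(\Psi(w)-\Phi(w))$, and one checks that $c(w):=G(w)-\frac\epsilon\lambda\Psi(w)$ obeys $c(w+1)=\lambda\,c(w)$, hence equals $c\,\lambda^{w}$ for a constant $c$; this gives $G(w)=c\,\lambda^{w}+\frac\epsilon\lambda\Psi(w)$ with $\Psi(w)=O(w^{-2q})$ admitting an expansion in integer powers of $1/w$. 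At $\lambda=1$ the relevant sums degenerate to a Hurwitz zeta, $\sum_{k\ge0}(w+k)^{-2q}=\zeta(2q,w)\sim\frac{w^{1-2q}}{2q-1}$, which is exactly the source of the extra contribution $\frac{\epsilon\,G(1)}{2q-1}\,w^{1-2q}=\epsilon f(1)\,\frac{\Gamma(2q-1)}{\Gamma(2q)}\,w^{1-2q}$; matching with the statement identifies $b=\epsilon f(1)$, so $b=f(1)$ for $\PP_q^+$, while $b=0$ for $\PP_q^-$ and, because the $w^{1-2q}$ and $\lambda\,w^{1-2q}$ terms of $(\star)$ cannot balance otherwise, for every $\lambda\neq1$. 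If $|\lambda|<1$ the function $\lambda^{w}$ decays geometrically and the forward series diverges; there I would instead continue $G$ leftward through $(\star)$ and characterise $c\,\lambda^{w}$ as the unique solution component decaying like $|\lambda|^{\Re w}$, the remainder again being $O(w^{-2q})$ with a $1/w$-expansion.

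Finally, writing $G(w)=c\,\lambda^{w}+\frac{\Gamma(2q-1)}{\Gamma(2q)}b\,w^{1-2q}+R(w)$ with $R$ holomorphic on $\Pi$, $R(w)=O(w^{-2q})$ and $R(w)\sim a_0w^{-2q}+a_1w^{-2q-1}+\cdots$, I invoke the characterisation of $\HH_q$ from \cite{BGI,BI}: such an $R$ equals $G_\phi$ for some $\phi\in L^2(m_q)$, and the leading asymptotic terms translate (by Watson's lemma read backwards for this class, Theorem \ref{lilla} underlying why $\phi$ inherits the regularity since $N_q$ has a smooth kernel) into $\phi(0)=a_0/\Gamma(2q)$ finite and $\phi(t)=\phi(0)+O(t)$; moreover $R=O(w^{-2q})$ says precisely that $\BB_q[\phi](x)=x^{-2q}R(1/x)$ is bounded as $x\to0$. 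Undoing $w=1/x$ gives the stated decomposition of $f$. The step I expect to be the real obstacle is the classification of the solutions of $(\star)$: making the splitting ``$c\,\lambda^{w}$ plus a $1/w$-asymptotic part'' precise and uniform on $\Pi$ for every $\lambda\in\C\setminus[0,1)$ at once --- especially reconciling the clean forward-iteration argument for $|\lambda|\ge1$ with the regime $|\lambda|<1$, where $c$ is no longer the dominant coefficient and must be extracted by analytic continuation together with a uniqueness statement for the geometrically decaying solution.
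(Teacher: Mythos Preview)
The paper does not prove this proposition; it is quoted from \cite{BI} with the citation in the proposition header and no proof is supplied here. So there is nothing in the present paper to compare your argument against directly. That said, your outline is very much in the spirit of the approach in \cite{BI} and \cite{LZ}: pass to the Fatou coordinate $w=1/x$, obtain the linear difference equation $G(w+1)-\lambda G(w)=-\epsilon\,w^{-2q}G(1+\tfrac1w)$, and peel off the homogeneous solution $c\lambda^{w}$ and the resonant term at $\lambda=1$ coming from the Hurwitz zeta asymptotics. Your dictionary and the computation $(\star)$ are correct, and your extension argument to make $G$ holomorphic at $w=1$ is the right one.

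There are, however, two genuine gaps you should be aware of. First, your own caveat about $|\lambda|<1$ is not a technicality: the statement covers all $\lambda\in\C\setminus[0,1)$, so e.g.\ $\lambda=-\tfrac12$ is allowed, and there the forward series $\sum_{k\ge0}\lambda^{-k}\Phi(w+k)$ diverges while $\lambda^{w}$ is not even single-valued on $\Pi$; the splitting you write down has no meaning until you fix a branch and produce a convergent particular solution (in \cite{BI} this is handled via the integral transform rather than by iterating the difference equation). Second, the step ``such an $R$ equals $G_\phi$ for some $\phi\in L^2(m_q)$'' is not justified by the asymptotic expansion alone: an asymptotic series $R(w)\sim\sum_{j\ge0}a_j w^{-2q-j}$ on a half-plane does not force $R$ to be the Laplace transform of an $L^2(m_q)$ function, let alone one with $\phi(t)=\phi(0)+O(t)$. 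What is actually used in \cite{BI} is an a priori representation of $f$ via an inverse of $\BB_q$ (a Hankel/Laplace inversion on $\HH(B)$), after which the difference equation becomes an identity for $\phi$ and the regularity $\phi(t)=\phi(0)+O(t)$ and the boundedness of $\BB_q[\phi]$ at $x\to0$ fall out of that analysis. Your Watson's-lemma-in-reverse step would need a Paley--Wiener type argument to be made rigorous.
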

 
The study of the spectral properties of $\PP_{q}^{\pm}$ on the space $\HH_{q}$, is then reduced to study of the spectral properties of the operators $P_{q}^{\pm} = M \pm N_{q}$ defined in Theorem \ref{lilla} on the space $L^{2}(m_{q})$. To this aim we recall some properties of the Hilbert space $L^{2}(m_{q})$. First of all the measure $m_q(dt)$ is finite, indeed
\[
\int_0^\infty\ m_q(dt)=\Gamma (2q)
\]
Second, the (generalised) Laguerre polynomials
$L_n^{2q-1}(t)$, $n\ge 0$, defined by
\begin{equation}\label{laguerres}
e_n(t):=L_n^{2q-1}(t) = \sum_{m=0}^n\ \comb{n+2q-1}{n-m} \
\frac{(-t)^m}{m!}
\end{equation}
form a complete orthogonal system in $L^2 (m_q)$, with
\begin{equation}\label{normaliz}
(e_n, e_m) = \frac{\Gamma(n+2q)}{n!}\ \delta_{n,m}
\end{equation}
The following families of functions are introduced in \cite{BGI}. Consider the independent
family of functions $f_n(t):= \frac{t^n}{n!}$ which satisfy
\begin{equation} \label{emme-enne}
N_{q} f_n  = M e_n \qquad N_{q} e_n= M f_n,
\end{equation} 
and let
\[
\ell_n^{\pm}(t):= e_n(t)\pm f_n(t), \ \qquad \ \zeta_n^{\pm}(t) :=
e^{-t}( e_n(t) \pm f_n(t))
\]
We have
\begin{proposition} \label{spazi}
Let $H^{\pm} := Span \set{\ell_{n}^{\pm}}_{n\ge 0}$ and $\EE^{\pm} := Span \set{\zeta_{n}^{\pm}}_{n\ge 0}$. Then
\begin{enumerate}[(i)]
\item $H^+ \cap H^- =\set{0}$ and $\EE^+ \cap \EE^- =\set{0}$; 

\item ${\rm Ker}\, P_{q}^{\pm} = H^{\mp}$;

\item $H^{\pm} = (\EE^{\mp})^{\bot}$, hence $L^{2}(m_{q}) = H^{+} \oplus \EE^{-} = H^{-} \oplus \EE^{+}$;

\item $P_q^\pm$ are positive operators;

\item if $P_{q}^{\pm} \phi = \lambda \phi$ for some $\lambda\not= 0$ then $\phi \in \EE^{\pm}$;

\item the spectrum of $P_{q}^{\pm}$ in $L^{2}(m_{q})$ is real and consists of the absolutely continuous spectrum $\sigma_{ac}(P_{q}^{\pm}) = [0,1]$ and of the point spectrum $\sigma_{p}(P_{q}^{\pm})$.
\end{enumerate}
\end{proposition}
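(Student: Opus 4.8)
The plan is to establish the six items roughly in the order given, using the two defining relations in \eqref{emme-enne} together with the orthogonality of the Laguerre system and the structure of $M$ and $N_q$.

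\emph{Items (i)--(iii).} I would start from the key observation that the map $T$ sending $e_n \mapsto f_n$ and $f_n \mapsto e_n$ (equivalently, $T=N_q M^{-1}$ on the relevant span, using \eqref{emme-enne}) is an involution, so that $\ell_n^{\pm}$ are its $\pm 1$-eigenvectors. For (i), a nonzero element of $H^+\cap H^-$ would be fixed by $T$ and by $-T$ simultaneously, forcing it to be $0$; the same argument, applied after multiplying by $e^{-t}$, gives $\EE^+\cap\EE^-=\set{0}$, since multiplication by $e^{-t}$ is injective. For (ii), note $P_q^{\pm}\ell_n^{\mp} = (M\pm N_q)(e_n\mp f_n) = Me_n \mp Mf_n \pm N_qe_n - N_qf_n = Me_n \mp Mf_n \pm Mf_n - Me_n = 0$ using \eqref{emme-enne}, so $H^{\mp}\subseteq {\rm Ker}\, P_q^{\pm}$; the reverse inclusion I would get from (iii) together with item (v), or directly by decomposing an arbitrary kernel element against the $\zeta_n^{\mp}$. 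For (iii), the computation $(\ell_n^{\pm},\zeta_m^{\mp}) = (e_n\pm f_n, e^{-t}(e_m\mp f_m)) = (Me_n\pm Mf_n, e_m\mp f_m) = (N_qf_n \pm N_qe_n, e_m \mp f_m)$, after moving the symmetric operator $N_q$ onto the other factor, collapses by \eqref{emme-enne} to $(M f_n \mp Me_n \pm Me_n \mp Mf_n,\dots)$-type cancellation, giving $0$; hence $H^{\pm}\perp\EE^{\mp}$. To upgrade $H^{\pm}\subseteq(\EE^{\mp})^{\bot}$ to equality, and to get the direct sum decomposition, I would argue that $\set{\zeta_n^{+}}\cup\set{\zeta_n^{-}}$ spans a dense subspace (each $\zeta_n^{\pm}$ is $e^{-t}$ times a Laguerre polynomial, and these exponentially weighted polynomials are total in $L^2(m_q)$), so $\EE^+ + \EE^-$ is dense and $(\EE^-)^{\bot}\cap(\EE^+)^{\bot}=\set{0}$; combined with the orthogonality just shown this pins down $(\EE^{\mp})^{\bot}=H^{\pm}$ and yields $L^2(m_q)=H^{\pm}\oplus\EE^{\mp}$.

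\emph{Items (iv)--(v).} Positivity of $P_q^{\pm}$ is the part I expect to be the main obstacle, since $N_q$ is not obviously sign-definite. I would try to exhibit $P_q^{\pm}$ as $A^*A$ for a natural operator, or to use the Bessel-kernel representation of $N_q$ to write $(P_q^{\pm}\varphi,\varphi)$ as an integral of a manifestly nonnegative quadratic form; alternatively one can transport the problem back to $\HH_q$ and use that $\PP_q^{\pm}$ is, on a suitable domain, an averaging operator with nonnegative kernel acting on a Hilbert space of holomorphic functions. (This is essentially where one invokes that $\PP_{0,q}$ and $\PP_{1,q}$ have nonnegative weights $(x+1)^{-2q}$.) Once positivity is in hand, (v) is immediate: from (ii) and (iii), $L^2(m_q)=\EE^{\pm}\oplus H^{\mp}$ with $H^{\mp}={\rm Ker}\,P_q^{\pm}$, so any eigenvector with nonzero eigenvalue must have trivial component in the kernel, i.e. must lie in $\EE^{\pm}$.

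\emph{Item (vi).} The decomposition $P_q^{\pm}=M\pm N_q$ exhibits $P_q^{\pm}$ as a self-adjoint operator that is a perturbation of the multiplication operator $M$ by $e^{-t}$, whose spectrum is exactly $[0,1]$ and is purely absolutely continuous. By Theorem \ref{lilla}, $\PP_q^{\pm}$ — hence $P_q^{\pm}$ — is a self-adjoint \emph{compact} perturbation of $M$; so I would invoke the Weyl essential-spectrum theorem to conclude $\sigma_{ess}(P_q^{\pm})=\sigma_{ess}(M)=[0,1]$, and the stability of the absolutely continuous spectrum under trace-class (indeed, here, the relevant Hilbert--Schmidt/compact) perturbations together with self-adjointness to identify $\sigma_{ac}(P_q^{\pm})=[0,1]$; everything outside $[0,1]$ then consists of isolated eigenvalues of finite multiplicity, which together with possible embedded eigenvalues forms $\sigma_p(P_q^{\pm})$. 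Reality of the whole spectrum is just self-adjointness of $P_q^{\pm}=M\pm N_q$ (with $M$, $N_q$ symmetric).
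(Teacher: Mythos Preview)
Your treatment of (v) and (vi) is essentially the paper's, and your computation $P_q^{\pm}\ell_n^{\mp}=0$ is exactly the easy half of (ii). There are, however, two real gaps.

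\textbf{Item (iv).} You flag positivity as the main obstacle and offer only heuristics (an $A^*A$ factorisation, the Bessel kernel, pulling back to $\HH_q$). The paper's argument is short and uses only what is already on the table. From $e_k=\tfrac12(\ell_k^++\ell_k^-)$ one gets $L^2(m_q)=H^++H^-$ (not orthogonally), and from \eqref{emme-enne} one checks the companion identity to your kernel computation, namely $P_q^{\pm}\ell_n^{\pm}=2M\ell_n^{\pm}$. Hence for $\phi=\phi_++\phi_-$ with $\phi_\pm\in H^\pm$,
\[
(\phi,P_q^{\pm}\phi)=(\phi_++\phi_-,P_q^{\pm}\phi_\pm)=(\phi_\pm,P_q^{\pm}\phi_\pm)=2\int_0^\infty e^{-t}\,|\phi_\pm(t)|^2\,m_q(dt)\ge 0,
\]
where the cross term drops by (ii) and self-adjointness. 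None of your proposed routes is needed, and the key identity $P_q^{\pm}|_{H^{\pm}}=2M$ is the missing idea.

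\textbf{Items (ii)--(iii).} Your logical structure is circular as written. You propose to obtain the hard inclusion ${\rm Ker}\,P_q^{\pm}\subseteq H^{\mp}$ from (iii) together with (v); but your (v) already invokes the full (ii). Moreover, your density step in (iii) does not close: from $H^{\pm}\subseteq(\EE^{\mp})^{\bot}$ and $(\EE^-)^{\bot}\cap(\EE^+)^{\bot}=\{0\}$ one cannot conclude $(\EE^{\mp})^{\bot}=H^{\pm}$ (take a three-dimensional counterexample). The paper instead takes the full statement of (ii) as input from \cite{BGI} and uses the identity $\zeta_n^{\mp}=Me_n\mp Mf_n=Me_n\mp N_qe_n=P_q^{\mp}e_n$ to run both inclusions of (iii) directly: for $H^+\subseteq(\EE^-)^{\bot}$ one has $(\ell_k^+,\zeta_n^-)=(\ell_k^+,P_q^-e_n)=(P_q^-\ell_k^+,e_n)=0$; and conversely, if $\phi\perp\EE^-$ then $(P_q^-\phi,e_n)=(\phi,P_q^-e_n)=(\phi,\zeta_n^-)=0$ for every $n$, so $\phi\in{\rm Ker}\,P_q^-=H^+$ by (ii). Your involution $T=N_qM^{-1}$ is a clean way to see (i), but it does not supply this missing inclusion.
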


\begin{proof}
Points (i)-(ii)-(vi) are proved in \cite{BGI}.

(iii) We first show that $H^{+} \subseteq (\EE^{-})^{\bot}$. It follows from
$$
(\ell_{k}^{+}, \zeta_{n}^{-}) = (\ell_{k}^{+}, Me_{n} - Mf_{n}) = (\ell_{k}^{+}, Me_{n} - N_{q}e_{n}) = (\ell_{k}^{+}, P_{q}^{-}e_{n}) = (P_{q}^{-} \ell_{k}^{+}, e_{n}) =0
$$
for any $n,k \ge 0$, where we have used (\ref{emme-enne}), the self-adjointness of $P_{q}^{-}$ and (ii). Finally we show $ (\EE^{-})^{\bot} \subseteq H^{+}$. Let indeed $\phi \in (\EE^{-})^{\bot}$ and write its expression in the basis $\set{e_{k}}$. Then for any $n\ge 0$
$$
0 = (\phi, \zeta_{n}^{-}) = \sum_{k}\, \phi_{k}\, (e_{k}, \zeta_{n}^{-}) = \sum_{k}\, \phi_{k}\, (P_{q}^{-} e_{k}, e_{n}) = (P_{q}^{-} \phi, e_{n}) 
$$
where we have argued as above. Then $\phi \in {\rm Ker}\, P_{q}^{-} = H^{+}$ by (ii). The same proof works to show that $H^{-} = (\EE^{+})^{\bot}$. Now (iii) follows by the standard theory of Hilbert spaces.

(iv) From $e_k=\frac 12 (\ell_k^+ + \ell_k^-)$ one deduces that $L^2(m_q)=H^+ + H^-$, even if the spaces are not mutually orthogonal. From this, writing $\phi = \phi_+ +\phi_-$, with $\phi_\pm \in H^\pm$, it follows that
$$(\phi, P_q^\pm \phi) = (\phi_+ + \phi_-, P_q^\pm \phi_\pm) = (\phi_\pm, P_q^\pm \phi_\pm)\, ,$$
where we have used (ii) and the self-adjointness of $P_q^\pm$. Moreover, from \eqref{emme-enne} it follows that $P_q^\pm \phi_\pm = 2M\phi_\pm$, hence
$$(P_q^\pm\phi_\pm,  \phi_\pm)=(\phi_\pm, P_q^\pm \phi_\pm) = \int_0^\infty\, 2\, e^{-t}\, |\phi_\pm(t)|^2 \, m_q(dt) \ge 0$$

(v) This follows from the self-adointness of $P_{q}^{\pm}$ together with the orthogonal decompositions of $L^{2}(m_{q})$ given in (iii). Let $\phi$ satisfy $P_{q}^{+} \phi = \lambda \phi$ for some $\lambda\not= 0$, then it is enough to show that for each $\psi \in H^-$ we have $(\phi, \psi)=0$. This follows from
\[
\lambda (\phi, \psi) = (\lambda \phi, \psi) = (P_{q}^{+} \phi, \psi) = (\phi, P_{q}^{+} \psi) = 0
\]
where in the last equality we have used (ii).
\end{proof}

Corollary 2.14 in \cite{BGI} has then to be changed into

\begin{proposition} \label{new-correct}
In $L^{2}(m_{q})$ the point spectrum of $P_{q}^{\pm}$ contains $\lambda=0$ with infinite multiplicity and real eigenvalues contained in the set $[0, 1+\gamma^{2q}]$, where $\gamma = \frac{\sqrt{5}-1}{2}$. The eigenvalues not embedded in $[0,1]$ are isolated and have finite multiplicity. 
\end{proposition}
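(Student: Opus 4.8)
The plan is to reduce everything to two facts about the operators $P_q^\pm=M\pm N_q$ on $L^2(m_q)$ provided by Theorem~\ref{lilla}: that $M$ is multiplication by $e^{-t}$, so that $\|M\|=\sup_{t>0}e^{-t}=1$ and, $m_q$ being non-atomic, $\sigma(M)=\sigma_{ess}(M)=[0,1]$ with no eigenvalues; and that $N_q$ is a \emph{compact, self-adjoint} operator with $\|N_q\|=\gamma^{2q}$. \emph{Reality and sign.} Self-adjointness of $P_q^\pm$ makes the point spectrum real, and Proposition~\ref{spazi}(iv) makes every eigenvalue non-negative. \emph{The eigenvalue $0$.} Its eigenspace is ${\rm Ker}\,P_q^\pm=H^\mp$ by Proposition~\ref{spazi}(ii); from \eqref{laguerres} the polynomial $\ell_n^\mp=e_n\mp f_n$ has leading coefficient $((-1)^n\mp1)/n!$, hence degree exactly $n$ whenever $n$ is odd (for $H^-$) or even (for $H^+$), so $\{\ell^-_{2k+1}\}_{k\ge 0}$ and $\{\ell^+_{2k}\}_{k\ge 0}$ are linearly independent and $\lambda=0$ has infinite multiplicity.

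\emph{The bound $\lambda\le 1+\gamma^{2q}$.} If $P_q^\pm\phi=\lambda\phi$ with $\|\phi\|=1$ then $\lambda=(\phi,M\phi)\pm(\phi,N_q\phi)\le\|M\|+\|N_q\|$, so it suffices to prove $\|N_q\|\le\gamma^{2q}$. Adding and subtracting the two identities of Theorem~\ref{lilla} and using $\PP_q^\pm=\PP_{0,q}\pm\PP_{1,q}$ gives $\PP_{0,q}\BB_q[\varphi]=\BB_q[M\varphi]$ and $\PP_{1,q}\BB_q[\varphi]=\BB_q[N_q\varphi]$; since $\BB_q$ is an isometric isomorphism of $L^2(m_q)$ onto $\HH_q$, the self-adjoint operator $N_q$ is unitarily equivalent to the weighted composition operator $\PP_{1,q}$ on $\HH_q$, and therefore $\|N_q\|$ equals the spectral radius of $\PP_{1,q}$. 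Writing $\psi_1(x)=\tfrac1{x+1}$ for the inverse branch appearing in $\PP_{1,q}$, the unique fixed point of $\psi_1$ in $B$ is the hyperbolic point $\gamma=\tfrac{\sqrt{5}-1}{2}$, with multiplier $-\gamma^2$ and weight $\bigl(\tfrac1{\gamma+1}\bigr)^{2q}=\gamma^{2q}$; moreover the second iterate of $\psi_1$ maps $\overline B$ into a compact subset of $B$, so a power of $\PP_{1,q}$ is nuclear and the standard spectral analysis of a weighted composition operator with strictly contracting symbol yields $\sigma(\PP_{1,q})=\{0\}\cup\{\gamma^{2q}(-\gamma^2)^n:n\ge 0\}$, of spectral radius $\gamma^{2q}$. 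Hence $\|N_q\|=\gamma^{2q}$ and the point spectrum is contained in $[0,1+\gamma^{2q}]$.

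\emph{Isolation and finite multiplicity off $[0,1]$.} As $N_q$ is compact, $P_q^\pm=M\pm N_q$ is a compact perturbation of $M$, so Weyl's theorem gives $\sigma_{ess}(P_q^\pm)=\sigma_{ess}(M)=[0,1]$, consistently with Proposition~\ref{spazi}(vi). For a self-adjoint operator the spectrum outside the essential spectrum consists of isolated eigenvalues of finite multiplicity; combined with the two previous points, the eigenvalues not in $[0,1]$ lie in $(1,1+\gamma^{2q}]$ and are isolated with finite multiplicity, which is the last assertion.

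The step I expect to fight with is the sharp identity $\|N_q\|=\gamma^{2q}$: the convenient spectral picture of $\PP_{1,q}$ is most easily obtained on $\HH(B)$, whereas the norm we actually need is the one on the subspace $\HH_q$, so one has to check that restricting $\PP_{1,q}$ to $\HH_q$ creates no new spectrum — equivalently, that every nonzero spectral value of $\PP_{1,q}$ is an eigenvalue whose eigenfunction (built from the convergent infinite product $\prod_{k\ge 0}\bigl(\gamma(\psi_1^k(x)+1)\bigr)^{-2q}$ and its derivatives at $\gamma$) lies in $\HH_q$. Alternatively, one can simply quote the explicit diagonalization of $N_q$ already carried out in \cite{BGI,BI}.
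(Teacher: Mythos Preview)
Your proof is correct and follows the paper's line closely: both use positivity (Proposition~\ref{spazi}(iv)) for the lower bound, the bound $\|P_q^\pm\|\le\|M\|+\|N_q\|=1+\gamma^{2q}$ for the upper bound (the paper phrases this as a resolvent-distance estimate around $\sigma(N_q)$, but the content is identical), and Weyl's theorem on compact perturbations for the essential spectrum $[0,1]$.

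The one genuine difference is how $\|N_q\|=\gamma^{2q}$ is obtained. The paper simply quotes \cite[Proposition~2.15]{BGI}, which gives $\sigma(N_q)=\{0\}\cup\{(-1)^k\gamma^{2q+2k}\}_{k\ge 0}$ directly. You instead derive it by observing that $N_q$ is unitarily equivalent (via $\BB_q$) to the weighted composition operator $\PP_{1,q}$ on $\HH_q$ and then invoking the spectral theory of contracting composition operators on spaces of holomorphic functions. This is a nice conceptual explanation of \emph{why} the golden ratio appears, but, as you correctly flag, it requires checking that restriction from $\HH(B)$ to $\HH_q$ does not alter the spectrum---a nontrivial point. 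Quoting \cite{BGI} is the cleaner route here, and since you already offer that alternative, the argument is complete either way. You also supply the short degree argument showing that $\{\ell^-_{2k+1}\}$ (resp.\ $\{\ell^+_{2k}\}$) is linearly independent, making the infinite multiplicity of $\lambda=0$ explicit; the paper leaves this implicit in Proposition~\ref{spazi}(ii).
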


\begin{proof}
The statements follow from the standard perturbation theory of operators (see e.g. \cite{K2}). In particular if we think of $P_{q}^{\pm} = M\pm N_{q}$ as a perturbation of the compact operator $N_{q}$, then it follows that the resolvent of $P_{q}^{\pm}$ contains the set $\set{z\in \C\ :\ dist(z,\sigma(N_{q}))>1}$. In \cite[Proposition 2.15]{BGI} it is shown that the spectrum of $N_{q}$, which consists only of eigenvalues, is
$$
\sigma(N_{q}) = \set{0} \cup \set{(-1)^{k}\, \gamma^{2q+2k}}_{k\ge 0}
$$
hence, since $P_q^\pm$ are positive operators, it immediately follows that the spectrum of $P_{q}^{\pm}$ is contained in the set $[0, 1+\gamma^{2q}]$. 

Moreover, if we instead think of $P_{q}^{\pm}$ as a compact perturbation of the operator $M$, then it follows that $P_{q}^{\pm}$ and $M$ have the same essential spectrum, the interval $[0,1]$.
\end{proof}

\section{The matrix approach} \label{sec:matrix}

We now give further results on the point spectrum of $P_{q}^{\pm}$. The results from \cite{Pr}, proved using theoretical and numerical techniques, imply that $\sigma_{p}(P_{q}^{+})$ consists of a single eigenvalue $\lambda_{q}\in (1,2]$ for $q\in [0,1)$, and is empty for $q\ge 1$. Moreover general results from \cite{PS} imply that $\lambda_{q}$ approaches 1 like $(1-q) \log \frac{1}{1-q}$ as $q$ approaches 1 from below. 

To study the behaviour of the eigenvalues, we consider the matrix formulation of $P_{q}^{\pm}$ in terms of the orthogonal basis $\set{e_{k}}$ of $L^{2}(m_{q})$ introduced in (\ref{laguerres}). Let $\phi \in L^{2}(m_{q})$ be written as
$$
\phi(t) = \sum_{n=0}^{\infty}\, \phi_{n}\, e_{n}(t) \qquad \text{with} \quad (\phi, e_{n}) = \phi_{n} \, (e_{n}, e_{n}) = \phi_{n}\, \frac{\Gamma(n+2q)}{n!}\, ,
$$
by \eqref{normaliz}, then $\phi$ is an eigenfunction of $P_{q}^{\pm}$ with eigenvalue $\lambda$ if and only if
$$
\left( P_{q}^{\pm} \phi, e_{k} \right) = \lambda\, (\phi, e_{k}) = \lambda\, \phi_{k}\, \frac{\Gamma(k+2q)}{k!} \qquad \forall\, k\ge 0\, .
$$
Moreover it has been shown in \cite{BGI} that
\[
c^{\pm}_{nk} := \left( P_{q}^{\pm} e_{n}, e_{k} \right) = \frac{\Gamma(n+2q)\, \Gamma(k+2q)}{2^{n+k+2q}}\, \sum_{m=0}^{\min\set{n,k}}\, \frac{1\pm(-1)^{m}}{\Gamma(m+2q)\, m!\, (n-m)!\, (k-m)!} > 0
\]
for all $n,k \ge 0$. Hence we obtain that
\begin{equation} \label{matrix-form}
P_{q}^{\pm} \phi = \lambda \phi \quad \Leftrightarrow \quad C^{\pm}\, \Phi = \lambda D\, \Phi \quad \Leftrightarrow \quad A^{\pm}\, \Phi = \lambda\, \Phi\, ,
\end{equation}
where $C^{\pm}$ and $D$ are symmetric infinite matrices given by
$$
C^{\pm} = (c^{\pm}_{kn})_{n,k\ge 0} \qquad D = \text{diag} \left( \frac{\Gamma(n+2q)}{n!} \right)\, ,
$$
$A^{\pm}$ is the infinite non-symmetric matrix
\begin{equation} \label{matrice-2}
A^{\pm} = (\alpha^{\pm}_{kn})_{n,k\ge 0}, \qquad \alpha^{\pm}_{kn} = \frac{k!\, \Gamma(n+2q)}{2^{n+k+2q}}\, \sum_{m=0}^{\min\set{n,k}}\, \frac{1\pm (-1)^{m}}{\Gamma(m+2q)\, m!\, (n-m)!\, (k-m)!} > 0\, ,
\end{equation}
and $\Phi$ is the infinite vector
\begin{equation} \label{cond-vett}
\Phi = \left( \phi_{n} \right)_{n\ge 0} \qquad \text{such that} \quad \sum_{n=0}^{\infty}\, |\phi_{n}|^{2}\, \frac{\Gamma(n+2q)}{n!} < \infty\, .
\end{equation}
By (\ref{matrix-form}) we can study the eigenvalue problem for the matrix $A^{\pm}$.

\begin{proposition} \label{p-matrix}
All minor determinants of second-order of the matrix $A^{\pm}$ are non-negative.
\end{proposition}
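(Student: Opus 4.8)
The plan is to verify directly that every $2\times 2$ minor
\[
\alpha^{\pm}_{kn}\,\alpha^{\pm}_{k'n'} - \alpha^{\pm}_{kn'}\,\alpha^{\pm}_{k'n} \ge 0 \qquad (k<k',\ n<n')
\]
is non-negative, i.e.\ that the matrix $A^{\pm}$ is \emph{totally positive of order $2$}. The first step is to strip away the parts of $\alpha^{\pm}_{kn}$ that do not affect the sign of a $2\times 2$ minor. Writing $\alpha^{\pm}_{kn} = \dfrac{k!\,\Gamma(n+2q)}{2^{n+k+2q}}\, \beta^{\pm}_{kn}$ with
\[
\beta^{\pm}_{kn} := \sum_{m=0}^{\min\set{k,n}} \frac{1\pm(-1)^{m}}{\Gamma(m+2q)\,m!\,(n-m)!\,(k-m)!}\, ,
\]
the prefactor $\dfrac{k!\,\Gamma(n+2q)}{2^{n+k+2q}}$ factors as a product of a function of $k$ alone times a function of $n$ alone (the $2^{-n-k-2q}$ splits multiplicatively), so it multiplies the minor of $(\beta^{\pm}_{kn})$ by a positive number. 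Hence it suffices to show that the symmetric matrix $B^{\pm} = (\beta^{\pm}_{kn})$ has all $2\times 2$ minors non-negative.

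Next I would recognise $B^{\pm}$ as a matrix of inner products, which makes total positivity of order $2$ transparent. Indeed, for the $+$ case only even $m$ survive and $1+(-1)^m = 2$, while for the $-$ case only odd $m$ survive; in either case
\[
\beta^{\pm}_{kn} = \sum_{m\ \in S_\pm,\ m\le \min\{k,n\}} \frac{2}{\Gamma(m+2q)\,m!}\cdot\frac{1}{(k-m)!}\cdot\frac{1}{(n-m)!}\, ,
\]
where $S_+ = \{0,2,4,\dots\}$ and $S_- = \{1,3,5,\dots\}$. Set $u_k := \bigl(\,[m\le k]/(k-m)!\,\bigr)_{m\in S_\pm}$, a vector indexed by $m$, and let $w_m := 2/(\Gamma(m+2q)\,m!) > 0$. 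Then $\beta^{\pm}_{kn} = \sum_m w_m\, (u_k)_m\, (u_n)_m = \langle u_k, u_n\rangle_w$, an inner product with positive weights $w_m$. A Gram-type matrix of this form automatically satisfies the Cauchy--Schwarz inequality $\beta^{\pm}_{kk}\,\beta^{\pm}_{nn} \ge (\beta^{\pm}_{kn})^2$, which handles the principal $2\times 2$ minors; for the general (non-principal) minors with $k<k'$ and $n<n'$ one argues the same way using that the relevant vectors, because $(k-m)!$ is log-convex / the sequence $1/(k-m)!$ is a totally positive kernel in $(k,m)$, are "nested" appropriately. The cleanest route is: the kernel $(k,m)\mapsto [m\le k]/(k-m)!$ is totally positive of all orders (it is a triangular Toeplitz kernel with the log-concave generating sequence $1/j!$), and multiplying on the right by the positive diagonal $\mathrm{diag}(w_m)$ and by the transpose of the same totally positive kernel preserves total positivity (Cauchy--Binet). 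Thus $B^{\pm}$, restricted to the index set $S_\pm$ in the inner summation, is a product of totally positive matrices, hence totally positive, in particular $\mathrm{TP}_2$.

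The main obstacle I anticipate is the bookkeeping around the \emph{truncation} $m \le \min\{k,n\}$ and the restriction to even/odd $m$: one must be careful that deleting rows/columns (here, the parity restriction on $m$, and the fact that the inner sum only runs up to $\min\{k,n\}$ rather than over all $m$) really does correspond to passing to a submatrix of a totally positive matrix, which is legitimate since submatrices of $\mathrm{TP}$ matrices are $\mathrm{TP}$. Concretely, extend the index $m$ to range over all of $S_\pm$ (or all of $\N$), note $1/(k-m)! = 0$ when $m>k$ by convention, so the truncation is automatic and $B^{\pm} = U_\pm\, W\, U_\pm^{T}$ with $U_\pm = \bigl([m\le k]/(k-m)!\bigr)_{k\ge0,\ m\in S_\pm}$ a genuine totally positive rectangular matrix and $W = \mathrm{diag}(w_m)$ positive; then Cauchy--Binet gives that every $2\times 2$ minor of $B^{\pm}$ is a sum over pairs $m_1<m_2$ in $S_\pm$ of (a $2\times2$ minor of $U_\pm$)$\times w_{m_1} w_{m_2}\times$(a $2\times2$ minor of $U_\pm$), each term $\ge 0$. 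Re-instating the positive rank-one prefactor $\dfrac{k!\,\Gamma(n+2q)}{2^{n+k+2q}}$ then yields the claim for $A^{\pm}$.
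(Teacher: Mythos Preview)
Your argument is correct, and it proceeds along genuinely different lines from the paper. The paper's appendix does a bare-hands verification of the \emph{adjacent} minors $\alpha^{\pm}_{k,n}\alpha^{\pm}_{k+1,n+1}-\alpha^{\pm}_{k+1,n}\alpha^{\pm}_{k,n+1}$, splitting into cases according to the parity of $n$ and the relative size of $k$ and $n$; after expanding the double sum and symmetrising over the summation indices $m,\tilde m$, the expression collapses to a sum with the visibly non-negative factor $(\tilde m-m)^{2}$. By contrast, you first peel off the rank-one prefactor $k!\,\Gamma(n+2q)/2^{n+k+2q}$, then recognise the remaining symmetric kernel as $B^{\pm}=U_{\pm}\,W\,U_{\pm}^{T}$, where $U$ is the lower-triangular Toeplitz matrix with symbol $e^{x}$ (hence totally positive of all orders), $U_{\pm}$ is its column-submatrix on $S_{\pm}$, and $W$ is a positive diagonal; Cauchy--Binet then exhibits every $2\times 2$ minor of $B^{\pm}$ as a non-negative combination of products of $2\times 2$ minors of $U_{\pm}$. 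Your route is more structural: it dispenses with the parity/size case split, handles all (not only adjacent) $2\times 2$ minors in one stroke, and in fact yields total positivity of every order for $A^{\pm}$ at no extra cost. The paper's computation, on the other hand, is entirely self-contained and needs no background on P\'olya frequency sequences or the Cauchy--Binet identity, which may be preferable for readers unfamiliar with total-positivity machinery. Your identification of the potential pitfalls (the truncation $m\le\min\{k,n\}$ and the parity restriction) and their resolution (the convention $1/j!=0$ for $j<0$ makes the truncation automatic; restricting columns preserves total positivity) is exactly right.
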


\begin{proof}
See the Appendix.
\end{proof}

\noindent We can now apply the main result of \cite{K1} to the bounded operators $A^{\pm}$ to obtain
\begin{proposition}[\cite{K1}] \label{autoval-gen}
Let $A^{\pm}_{N}$ be the $N\times N$ north-west corner approximation of the matrix $A^{\pm}$ and let $\lambda_{N}$ and $\phi^{N}$ be the dominant eigenvalue and eigenvector with $\phi^{N}_{0}=1$. Then the sequence $\set{\lambda_{N}}_{N}$ is increasing and the sequences $\set{\phi^{N}_{k}}_{N}$ with fixed $k$ are non-decreasing. Then $\lambda := \lim_{_{N\to \infty}} \lambda_{N}$ is an eigenvalue of $A^{\pm}$ and $\Phi = (\phi_{k})_{k\ge 0}$, defined as $\phi_{k} := \lim_{_{N\to \infty}} \phi^{N}_{k}$, is an eigenvector such that $A^{\pm} \Phi = \lambda \Phi$.
\end{proposition}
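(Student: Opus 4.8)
The plan is to verify that the matrices $A^{\pm}$ satisfy the hypotheses of the Krein–Rutman type theorem from \cite{K1} and then invoke that result directly. The theorem in \cite{K1} deals with infinite matrices $A = (\alpha_{kn})_{k,n\ge 0}$ that are (a) entrywise positive, (b) ``oscillation'' or ``totally positive of order $2$'', meaning all $2\times 2$ minors are non-negative, and (c) bounded as operators on an appropriate sequence space; under these assumptions the north-west corner truncations $A_N$ have a simple dominant eigenvalue $\lambda_N$ with a positive eigenvector, the $\lambda_N$ increase, the normalized eigenvectors increase componentwise, and the limits furnish a genuine eigenpair of $A$. So the proof reduces to checking (a), (b), (c) for our specific $A^{\pm}$.

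First I would record that condition (a) is immediate from the explicit formula \eqref{matrice-2}: the sum over $m$ has non-negative terms $\frac{1\pm(-1)^m}{\Gamma(m+2q)\,m!\,(n-m)!\,(k-m)!}$, the $m=0$ term is strictly positive, and the prefactor $\frac{k!\,\Gamma(n+2q)}{2^{n+k+2q}}$ is positive for $q>0$, so indeed $\alpha^{\pm}_{kn}>0$ for all $k,n$. Condition (b) is precisely the content of Proposition \ref{p-matrix}, which is proved in the Appendix, so I would just cite it. Condition (c), boundedness, I would deduce from the work already done in Section \ref{sec:spectrum}: by the equivalences in \eqref{matrix-form} the matrix $A^{\pm}$ represents the operator $P_q^{\pm}$ on $L^2(m_q)$ after the change of basis implementing the Gram matrix $D$, i.e. $A^{\pm}$ acts on the weighted $\ell^2$ space described by \eqref{cond-vett}, which is isometric to $L^2(m_q)$ via $\phi\mapsto(\phi_n)$; and $P_q^{\pm}=M\pm N_q$ is bounded on $L^2(m_q)$ (indeed by Proposition \ref{new-correct} its spectrum lies in $[0,1+\gamma^{2q}]$ and it is self-adjoint, so $\|P_q^{\pm}\|\le 1+\gamma^{2q}$). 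Hence $A^{\pm}$ is a bounded operator on the relevant Hilbert space, with the same norm.

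With (a), (b), (c) in hand, the theorem of \cite{K1} applies verbatim and yields exactly the stated conclusions: monotone convergence of $\lambda_N$ to some $\lambda$, componentwise monotone convergence of the normalized eigenvectors $\phi^N$ to a vector $\Phi$, and the identity $A^{\pm}\Phi=\lambda\Phi$ in the appropriate space. One should note that the limiting eigenvector $\Phi$ a priori lies in the space dual to the one on which boundedness was checked, but since $A^{\pm}$ is bounded and self-adjoint (up to the $D$-conjugation) on the weighted $\ell^2$ space, and the $\phi^N_k$ are monotone and bounded, $\Phi$ indeed satisfies \eqref{cond-vett} and is a legitimate eigenvector of $P_q^{\pm}$ through \eqref{matrix-form}.

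The main obstacle is not any single computation but making sure the abstract theorem from \cite{K1} is being applied under the correct functional-analytic setup: one must be careful that $A^{\pm}$, although not symmetric, is similar to the self-adjoint operator $P_q^{\pm}$ via the positive diagonal $D^{1/2}$, so that the spectral and boundedness facts transfer, and that the north-west truncations $A^{\pm}_N$ correspond under this similarity to the compressions of $P_q^{\pm}$ to the span of $\{e_0,\dots,e_{N-1}\}$, whose dominant eigenvalues are therefore real, positive, and—by the Perron–Frobenius theorem applied to the strictly positive finite matrices $A^{\pm}_N$—simple with a strictly positive eigenvector, which is what legitimizes the normalization $\phi^N_0=1$. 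Once these identifications are in place, the monotonicity statements are exactly \cite{K1}, and there is nothing further to prove.
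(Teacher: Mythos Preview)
Your approach matches the paper's: the paper does not give its own proof of this proposition but simply cites \cite{K1} after having recorded positivity of the entries and, via Proposition \ref{p-matrix}, non-negativity of the second-order minors, together with the boundedness of $A^{\pm}$ coming from $P_q^{\pm}$. So your plan of verifying the hypotheses and then invoking Kato's theorem is exactly what the paper does, only spelled out more explicitly.

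There is, however, one concrete slip in your verification of (a). You write that ``the $m=0$ term is strictly positive,'' but for $A^{-}$ the summand carries the factor $1-(-1)^m$, which vanishes at $m=0$. Consequently $\alpha^{-}_{k0}=\alpha^{-}_{0n}=0$ for all $k,n$, so $A^{-}$ is \emph{not} entrywise positive and the Perron--Frobenius argument you invoke for the finite truncations $A^{-}_N$ does not apply as stated; in particular the normalization $\phi^N_0=1$ is impossible for $A^{-}$. The paper is aware of this: Proposition \ref{primi-ris}(ii) records $\phi^{-}_0=0$, $\phi^{-}_1=1$, and the proof says this ``follows from the construction of $\Phi^{-}$.'' The easy fix is to note that the submatrix $(\alpha^{-}_{kn})_{k,n\ge 1}$ \emph{is} entrywise positive (the $m=1$ term is strictly positive once $\min\{k,n\}\ge 1$), apply Kato's result to that block, and set $\phi^{-}_0=0$. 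Your argument for $A^{+}$ is fine. Finally, note that the paper does not claim that \eqref{cond-vett} for the limiting $\Phi$ follows from Kato's theorem; it explicitly defers this to results of \cite{Pr}, so your closing paragraph asserting it from boundedness alone goes beyond what is actually established here.
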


From the theory of finite positive matrices it also follows that the eigenvalues $\lambda_{N}$ are positive and that for each $k$ it holds $\phi^{N}_{k} >0$. Hence the eigenvalue $\lambda$ and the eigenvector $\Phi$ are positive. We remark that condition (\ref{cond-vett}) for $\Phi$ is guaranteed by results in \cite{Pr}. However our method doesn't imply the non-existence of other eigenvectors, which is known again by \cite{Pr}.

A first consequence is the following proposition.

\begin{proposition} \label{primi-ris}
Let $\lambda^{\pm}_{q}$ be the eigenvalues of $A^{\pm}$ as obtained in Proposition \ref{autoval-gen} as functions of the real parameter $q$, and $\Phi^{\pm}$ the correspondent positive eigenvectors. Then
\begin{enumerate}[(i)]
\item $\phi^{+}_{0} = 1$, $\phi^{+}_{1} = \frac 12$;

\item $\phi^{-}_{0} = 0$, $\phi^{-}_{1} = 1$;

\item if $\phi^{+}_{k} \le \frac 12$ for all $k\ge 1$ then $\lambda^{+}_{q} \le 1+ 2^{-2q}$;

\item if $\phi^{-}_{k} \le 1$ for all $k\ge 1$ then $\lambda^{-}_{q} \le 1$.
\end{enumerate}
\end{proposition}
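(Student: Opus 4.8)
The plan is to exploit the explicit form of the entries $\alpha^{\pm}_{kn}$ in \eqref{matrice-2} together with the monotone-convergence structure from Proposition \ref{autoval-gen}. For (i)--(ii), I would look at the eigenvalue equation $A^{\pm}\Phi^{\pm} = \lambda^{\pm}_{q}\Phi^{\pm}$ row by row. The key observation is the relation between the $k=0$ and $k=1$ rows. Writing out $\alpha^{\pm}_{0n}$ and $\alpha^{\pm}_{1n}$ from \eqref{matrice-2}: the sum over $m$ collapses for $k=0$ (only $m=0$ survives, and $1\pm(-1)^{0} = 1\pm 1$), giving $\alpha^{+}_{0n} = \frac{2\,\Gamma(n+2q)}{2^{n+2q}\,\Gamma(2q)\,n!}$ and $\alpha^{-}_{0n} = 0$ for all $n$. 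Hence for the ``$-$'' case the $k=0$ row of $A^{-}\Phi^{-} = \lambda^{-}_{q}\Phi^{-}$ reads $0 = \lambda^{-}_{q}\,\phi^{-}_{0}$, and since $\lambda^{-}_{q}>0$ we get $\phi^{-}_{0}=0$; combined with the normalization (which for the ``$-$'' eigenvector must be taken as $\phi^{-}_{1}=1$, since $\phi^{-}_{0}=0$ forces a shift of the normalization used in Proposition \ref{autoval-gen}) this gives (ii). For the ``$+$'' case, $\phi^{+}_{0}=1$ is the normalization; to get $\phi^{+}_{1}=\tfrac12$ I would compare the $k=0$ and $k=1$ rows and show $\alpha^{+}_{1n} = \tfrac12\,\alpha^{+}_{0n}$ for every $n$. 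Indeed for $k=1$ the inner sum runs over $m\in\{0,1\}$, but the $m=1$ term carries the factor $1+(-1)^{1}=0$, so only $m=0$ contributes, giving $\alpha^{+}_{1n} = \frac{1!\,\Gamma(n+2q)}{2^{n+1+2q}\,\Gamma(2q)\,n!} = \tfrac12\,\alpha^{+}_{0n}$. Therefore the $k=1$ equation is exactly $\tfrac12$ times the $k=0$ equation, whence $\lambda^{+}_{q}\,\phi^{+}_{1} = \tfrac12\,\lambda^{+}_{q}\,\phi^{+}_{0} = \tfrac12\,\lambda^{+}_{q}$, and dividing by $\lambda^{+}_{q}>0$ yields $\phi^{+}_{1}=\tfrac12$. (One should double-check whether the same $m=1$-kills-the-term phenomenon on the $k=1$ row of $A^{-}$, where $1-(-1)=2\neq 0$, is consistent with $\phi^{-}_{1}=1$; there the $k=1$ row genuinely involves $m=0,1$ and determines $\lambda^{-}_{q}$ rather than a ratio of coordinates.)

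For (iii)--(iv) I would again use the $k=0$ row, which is the cleanest. In the ``$+$'' case, the $k=0$ equation of $A^{+}\Phi^{+}=\lambda^{+}_{q}\Phi^{+}$ is
\[
\lambda^{+}_{q}\,\phi^{+}_{0} \;=\; \sum_{n=0}^{\infty}\,\alpha^{+}_{0n}\,\phi^{+}_{n} \;=\; \frac{2}{\Gamma(2q)}\,\sum_{n=0}^{\infty}\,\frac{\Gamma(n+2q)}{2^{n+2q}\,n!}\,\phi^{+}_{n}.
\]
Using $\phi^{+}_{0}=1$, $\phi^{+}_{1}=\tfrac12$, and the hypothesis $\phi^{+}_{k}\le\tfrac12$ for $k\ge1$, I would bound the tail $n\ge1$ by replacing $\phi^{+}_{n}$ with $\tfrac12$ and summing the resulting geometric-type series $\sum_{n\ge0}\frac{\Gamma(n+2q)}{\Gamma(2q)\,n!}\,2^{-n}$ via the binomial/negative-binomial identity $\sum_{n\ge0}\binom{n+2q-1}{n}x^{n} = (1-x)^{-2q}$ with $x=\tfrac12$, which gives $2^{2q}$. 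Carefully separating the $n=0$ term (contributing $2\cdot 2^{-2q}\cdot1$) from the rest, the bound should collapse to exactly $\lambda^{+}_{q}\le 1 + 2^{-2q}$. The analogous computation for (iv) uses the $k=1$ row of $A^{-}\Phi^{-}=\lambda^{-}_{q}\Phi^{-}$ (since the $k=0$ row is vacuous there): one writes $\alpha^{-}_{1n}$ explicitly, isolates the $n=1$ term (which, after using $\phi^{-}_{1}=1$, produces the ``$1$''), bounds the remaining terms $\phi^{-}_{n}\le1$, and sums the resulting series — which I expect to telescope to $0$ by the same generating-function identity — leaving $\lambda^{-}_{q}\le1$.

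The main obstacle I anticipate is the bookkeeping in (iii)--(iv): one must identify the right row of the eigenvalue equation (the $k=0$ row vanishes identically in the ``$-$'' case, so (iv) must be run off the $k=1$ row, where two values of $m$ survive and the combinatorics are messier), correctly split off the ``diagonal'' term that produces the leading $1$, and then recognize that the remaining infinite sum is precisely a hypergeometric/negative-binomial series that evaluates in closed form. A secondary subtlety is that Proposition \ref{autoval-gen} normalizes by $\phi^{N}_{0}=1$, which is fine for $A^{+}$ but degenerate for $A^{-}$ (where $\phi^{-}_{0}=0$); one has to note that the monotone-limit construction still goes through with the normalization $\phi^{-}_{1}=1$, or equivalently that the first column/row can be deleted and the result applied to the remaining matrix. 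Once these indexing issues are handled, each estimate is a one-line geometric-series bound, so I would not expect any analytic difficulty beyond the identity $\sum_{n\ge0}\binom{n+2q-1}{n}2^{-n} = 2^{2q}$ and its first-derivative variant.
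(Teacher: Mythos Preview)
Your proposal is correct and follows essentially the same route as the paper: (i) via $\alpha^{+}_{0n}=2\alpha^{+}_{1n}$, (ii) via $\alpha^{-}_{0n}=0$, and (iii)--(iv) by reading off the $k=0$ (resp.\ $k=1$) row of the eigenvalue equation, bounding $\phi^{\pm}_n$ by the assumed constant, and summing the negative-binomial series. Two small corrections to your bookkeeping: in the $k=1$ row of $A^{-}$ only $m=1$ survives (since $1-(-1)^0=0$), so the combinatorics are no messier than in (iii); and in (iv) it is not the isolated $n=1$ term but the full series $\sum_{n\ge 1}\alpha^{-}_{1n}$ that evaluates to $1$, via the identity with exponent $2q+1$ rather than $2q$.
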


\begin{proof}
(i) follows from $\alpha^{+}_{0n} = 2 \alpha^{+}_{1n}$. (ii) follows from the construction of $\Phi^{-}$. (iii) follows from
$$
\lambda^{+}_{q} = \lambda^{+}_{q} \phi^{+}_{0} = \alpha^{+}_{00} \phi^{+}_{0} + \sum_{n=1}^{\infty}\, \alpha^{+}_{0n}\, \phi^{+}_{n} \le 2^{1-2q} + \frac 12\ \sum_{n=1}^{\infty}\, 2^{-n-2q+1}\, \frac{\Gamma(n+2q)}{\Gamma(2q)\, n!} = 1+2^{-2q}
$$
where in the last equality we have used the identity (see e.g. \cite[vol.I, pag.101]{Er})
$$
(1+z)^{a} = F(-a, b;b; -z) = \sum_{n=0}^{\infty}\, \frac{\Gamma(n-a)}{\Gamma(-a)\, n!}\, (-z)^{n}
$$
for the hypergeometric functions. In the same way (iv) follows from
$$
\lambda^{-}_{q} = \lambda^{-}_{q} \phi^{-}_{1} = \sum_{n=1}^{\infty}\, \alpha^{-}_{1n}\, \phi^{-}_{n} \le \sum_{n=1}^{\infty}\, 2^{-n-2q}\, \frac{\Gamma(n+2q)}{\Gamma(2q+1)\, (n-1)!} = 1
$$
\end{proof}

\vskip 0.2cm

Proposition \ref{autoval-gen} implies that we can numerically find curves which are a lower bound for the eigenvalues $\lambda^{\pm}_{q}$. If we fix $N=50$ in Proposition \ref{autoval-gen} we obtain curves $\lambda^{\pm}_{50}(q)$ which are a lower bound for $\lambda^{\pm}_{q}$. This is shown in Figure \ref{confronto}. The convergence of $\lambda^{\pm}_{N}(q)$ to $\lambda^{\pm}_{q}$ is quite fast, as shown for $\lambda^{+}_{N}$ in Figure \ref{convergence} for the cases $q=\frac 13$ and $q=0.95$ with $N=1,\dots,50$. Moreover the sequences $\{\lambda^{\pm}_{N}(q)\}$ are increasing, hence we can determine $\lambda^{\pm}_{q}$ with arbitrary accuracy by increasing $N$.

Results from Proposition \ref{primi-ris} and Figure \ref{confronto} suggest that there is an eigenvalue bigger than 1 only for the operators $P^+_q$ with $q\in (0,1)$, whereas the entire spectrum of $P^-_q$ is contained in $[0,1]$. Hence we now restrict to the operators $P^+_q$ and numerically analyze the convergence of the sum in \eqref{cond-vett} for the eigenvector $\Phi$ corresponding to the dominant eigenvalue. 

By Proposition \ref{autoval-gen}, for any fixed value $N$ we find $\phi^N$, which gives an approximation of the first $N$ components of $\Phi$. Then we consider the sums
\[
S_N(k):= \sum_{n=0}^{k}\, |\phi_{n}^N|^{2}\, \frac{\Gamma(n+2q)}{n!}\qquad k=0,\dots,N-1\, .
\]
and plot the functions $k\mapsto S_N(k)$  for different values of $N$, in particular $N=50, 100, 130$ in Figure \ref{fig-sums}. Using different $N$ we want to measure the changes in the components of the eigenvector. What we find is that for $q$ small, the changes are small (for $q=0.3$ the curves $S_N(k)$ are close to each other), and increasing $q$ the changes are bigger. This is reasonable, since we know that for $q=1$ there is no convergence of the sum $S_N(N-1)$ as $N\to \infty$.

Therefore, we interpret the curves in Figure \ref{fig-sums} as an indication that for $q= 0.3$, $q=0.5$ and $q=0.95$, i.e for $q<1$, there is a convergence to a real eigenvector of the infinite matrix, but this does not happen for $q=1$.

\appendix

\section*{Appendix. Proof of Proposition \ref{p-matrix}}

Let's first consider $A^{+}$. Using \eqref{matrice-2}, we have to show that
\[
\alpha^{+}_{k,n}\, \alpha^{+}_{k+1, n+1}\, - \alpha^{+}_{k+1,n}\, \alpha^{+}_{k,n+1}\, \ge 0
\]
for all $k,n\ge 0$. We show the proof in the case $n$ even and $k\ge n+1$. The other cases and the computations for $A^-$ are similar.

\underline{Case $n$ even and $k\ge n+1$.} We have $\min\set{k,n} = n$, $\min\set{k,n+1}= n+1$ and $1+(-1)^{n+1}=0$. Hence we find
\[
\alpha^{+}_{k, n} = \frac{k!\, \Gamma(n+2q)}{2^{n+k+2q}}\, \sum_{m=0}^{n}\, \frac{1+ (-1)^{m}}{\Gamma(m+2q)\, m!\, (n-m)!\, (k-m)!} 
\]
\[
\alpha^{+}_{k+1, n+1} = \frac{(k+1)!\, \Gamma(n+1+2q)}{2^{n+k+2+2q}}\, \sum_{m=0}^{n}\, \frac{1+ (-1)^{m}}{\Gamma(m+2q)\, m!\, (n+1-m)!\, (k+1-m)!} 
\]
\[
\alpha^{+}_{k, n+1} = \frac{k!\, \Gamma(n+1+2q)}{2^{n+k+1+2q}}\, \sum_{m=0}^{n}\, \frac{1+ (-1)^{m}}{\Gamma(m+2q)\, m!\, (n+1-m)!\, (k-m)!} 
\]
\[
\alpha^{+}_{k+1, n} = \frac{(k+1)!\, \Gamma(n+2q)}{2^{n+k+1+2q}}\, \sum_{m=0}^{n}\, \frac{1+ (-1)^{m}}{\Gamma(m+2q)\, m!\, (n-m)!\, (k+1-m)!} 
\]
Then we can define the factors
\[
C_{k,n}:= \frac{k!\ (k+1)! \Gamma(n+2q)\, \Gamma(n+1+2q)}{2^{2n+2k+2+4q}} \ge 0
\]
\[
C_{m,\tilde m}:= \frac{(1+ (-1)^{m})\, (1+ (-1)^{\tilde m})}{\Gamma(m+2q)\Gamma(\tilde m+2q) m!\, \tilde m!} \ge 0
\]
and write
\[
\begin{aligned}
& \alpha^{+}_{k,n}\, \alpha^{+}_{k+1, n+1}\, - \alpha^{+}_{k+1,n}\, \alpha^{+}_{k,n+1} = \\[0.2cm]
& =C_{k,n} \sum_{m=0}^{n}\, \sum_{\tilde m=0}^{n}\, \frac{(1+ (-1)^{m})\, (1+ (-1)^{\tilde m})}{\Gamma(m+2q)\Gamma(\tilde m+2q) m!\, \tilde m!}\ \frac{1}{(n-m)!(k-m)! (n+1-\tilde m)! (k+1-\tilde m)!} - \\[0.2cm]
& - C_{k,n} \sum_{m=0}^{n}\, \sum_{\tilde m=0}^{n}\, \frac{(1+ (-1)^{m})\, (1+ (-1)^{\tilde m})}{\Gamma(m+2q)\Gamma(\tilde m+2q) m!\, \tilde m!}\ \frac{1}{(n-m)!(k+1-m)! (n+1-\tilde m)! (k-\tilde m)!} =\\[0.2cm]
& = C_{k,n} \sum_{m=0}^{n}\, \sum_{\tilde m=0}^{n}\, C_{m,\tilde m}\ \frac{1}{(n-m)!(n+1-\tilde m)!} \Big( \frac{1}{(k-m)!(k+1-\tilde m)!} - \frac{1}{(k+1-m)!(k-\tilde m)!} \Big)=\\[0.2cm]
& = C_{k,n} \sum_{m=0}^{n}\, \sum_{\tilde m=m}^{n}\, C_{m,\tilde m}\ \frac{\tilde m - m}{(n-m)!(n+1-\tilde m)! (k+1-m)!(k+1-\tilde m)!} + \\[0.2cm]
& + C_{k,n} \sum_{\tilde m=0}^{n}\, \sum_{m=\tilde m}^{n}\, C_{m,\tilde m}\ \frac{\tilde m - m}{(n-m)!(n+1-\tilde m)! (k+1-m)!(k+1-\tilde m)!} = \\[0.2cm]
\end{aligned}
\]
where in the last equality we have splitted the sum in the square into the two sums below and above the diagonal. At this point we have written our determinant as the sum of two similar terms, and to compare them we change the names in the second sum and notice that $C_{m,\tilde m} = C_{\tilde m, m}$. Hence we write
\[
\begin{aligned}
& \alpha^{+}_{k,n}\, \alpha^{+}_{k+1, n+1}\, - \alpha^{+}_{k+1,n}\, \alpha^{+}_{k,n+1} = \\[0.2cm]
& = C_{k,n} \sum_{m=0}^{n}\, \sum_{\tilde m=m}^{n}\, C_{m,\tilde m}\ \frac{\tilde m - m}{(n-m)!(n+1-\tilde m)! (k+1-m)!(k+1-\tilde m)!} - \\[0.2cm]
& - C_{k,n} \sum_{m=0}^{n}\, \sum_{\tilde m=m}^{n}\, C_{\tilde m, m} \frac{\tilde m - m}{(n-\tilde m)!(n+1- m)! (k+1-\tilde m)!(k+1-m)!} = \\[0.2cm]
& = C_{k,n} \sum_{m=0}^{n}\, \sum_{\tilde m=m}^{n}\, C_{m,\tilde m}\ \frac{\tilde m - m}{(n-m)!(n-\tilde m)! (k+1-m)!(k+1-\tilde m)!}\Big( \frac{1}{n+1-\tilde m} - \frac{1}{n+1-m} \Big)=\\[0.2cm]
&= C_{k,n} \sum_{m=0}^{n}\, \sum_{\tilde m=m}^{n}\, C_{m,\tilde m}\ \frac{(\tilde m - m)^{2}}{(n+1-m)!(n+1-\tilde m)! (k+1-m)!(k+1-\tilde m)!} \ge 0
\end{aligned}
\]

\begin{figure}[h]
\begin{center}
\subfigure[]
    {\includegraphics[width=6cm]{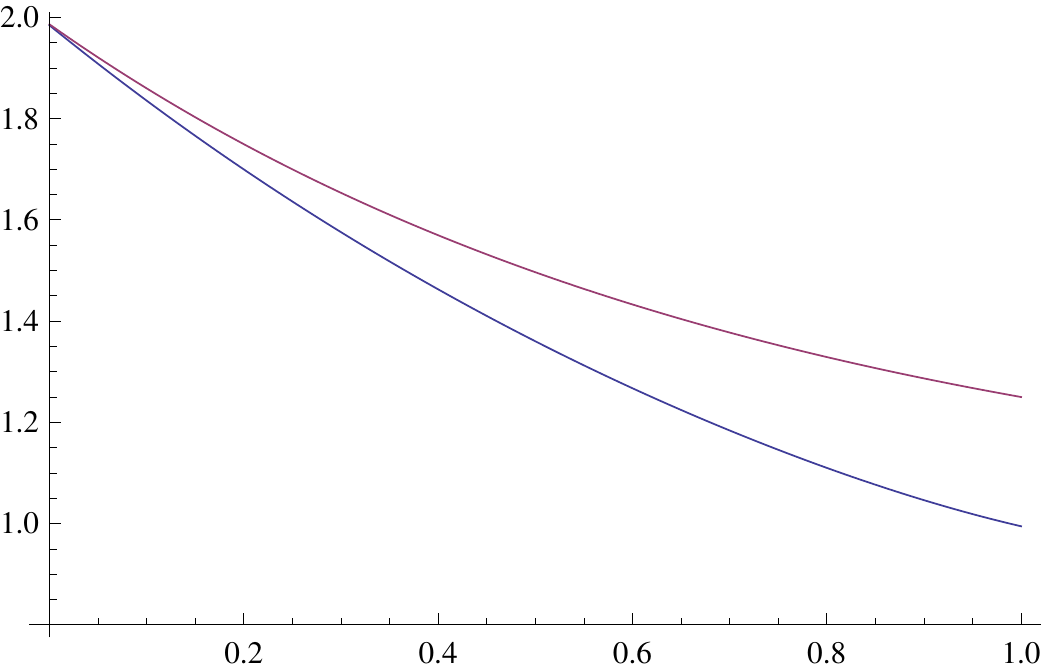}}
    \hspace{0.7cm}
    \subfigure[]
    {\includegraphics[width=6cm]{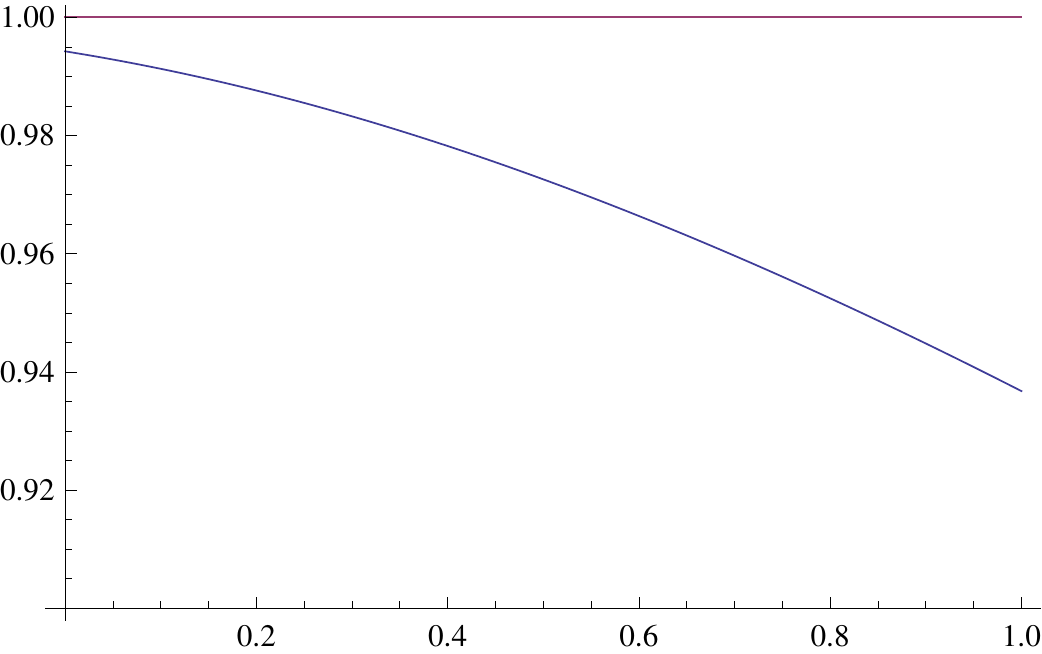}}
\caption{(a) The blue line is the curve $\lambda^{+}_{50}(q)$ and the red one is the function $f^{+}(q) = 1 + 2^{-2q}$; (b) the blue line is the curve $\lambda^{-}_{50}(q)$ and the red one is the function $f^{-}(q)=1$.} \label{confronto}
\end{center}
\end{figure}

\begin{figure}[h]
    \begin{center}
    \subfigure[]
    {\includegraphics[width=6.0cm,height=4.5cm]{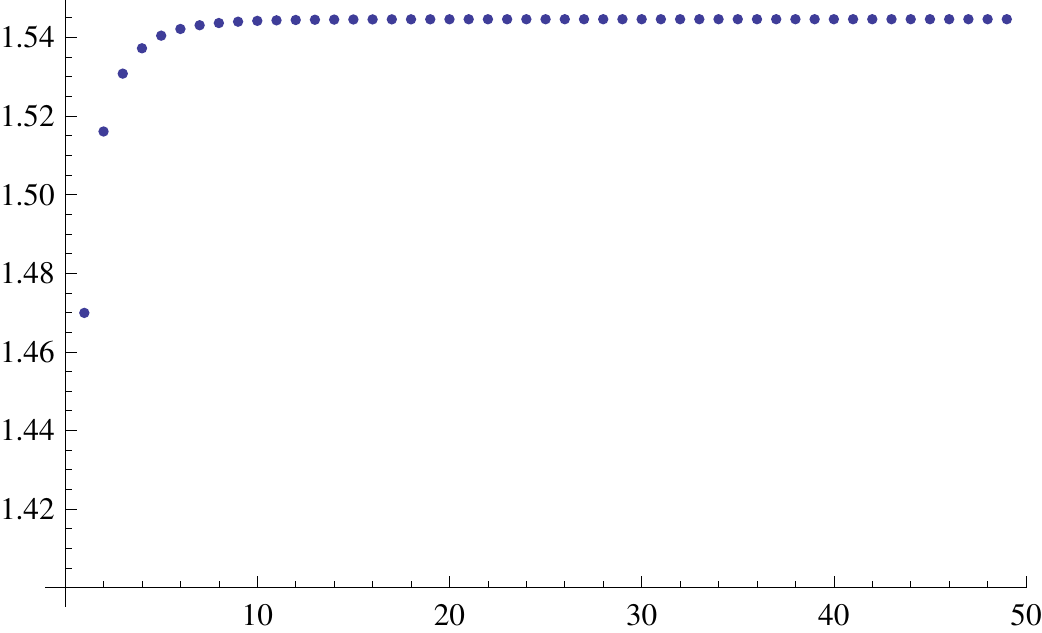}}
    \hspace{0.7cm}
    \subfigure[]
    {\includegraphics[width=6.0cm,height=4.5cm]{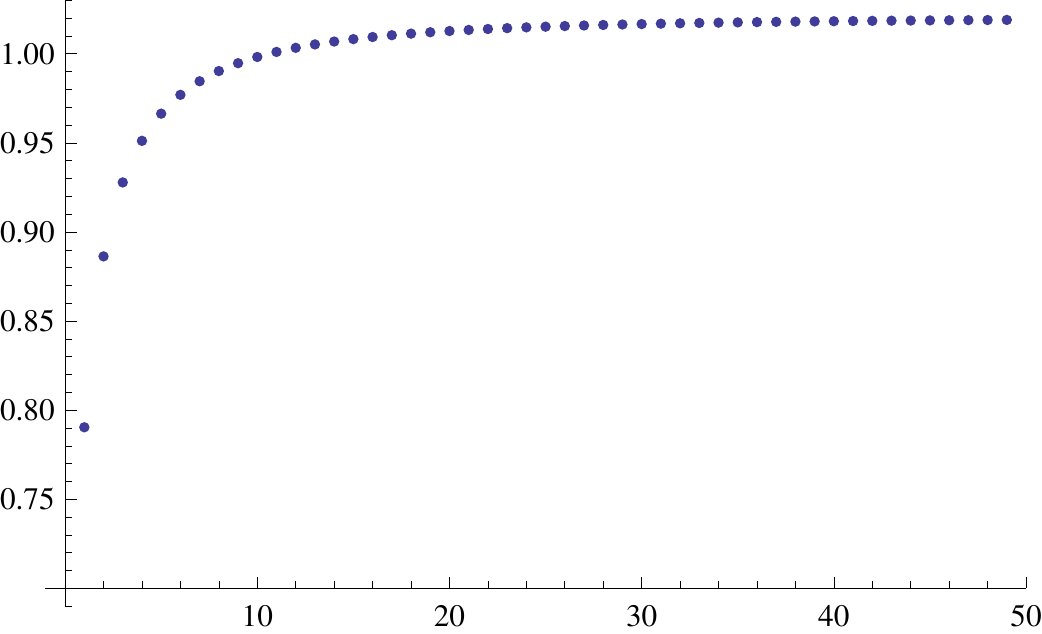}}
    \caption{The sequences $\set{\lambda^{+}_{N}(q)}$ for $q=1/3$ on the left and $q=0.95$ on the right with $N=1,\dots,50$.}
    \label{convergence}
    \end{center}
\end{figure}

\begin{figure}[h]
\begin{center}
\subfigure[]
    {\includegraphics[width=6cm]{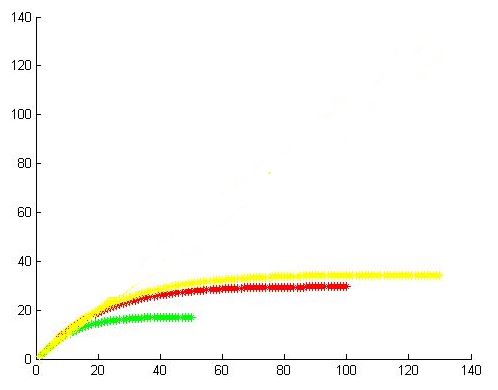}}
    \hspace{0.7cm}
    \subfigure[]
    {\includegraphics[width=6cm]{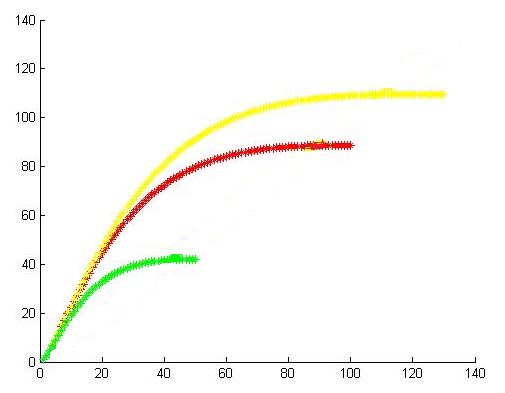}}
\subfigure[]
    {\includegraphics[width=6.0cm,height=4.5cm]{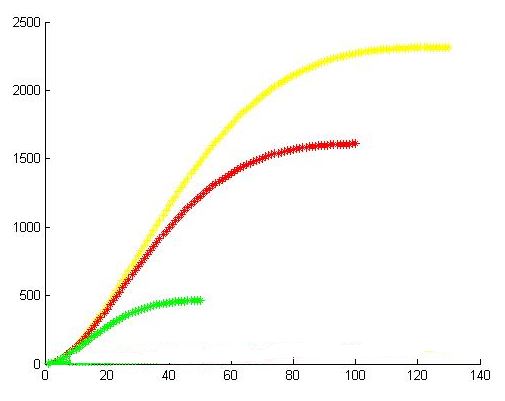}}
    \hspace{0.7cm}
    \subfigure[]
    {\includegraphics[width=6.0cm,height=4.5cm]{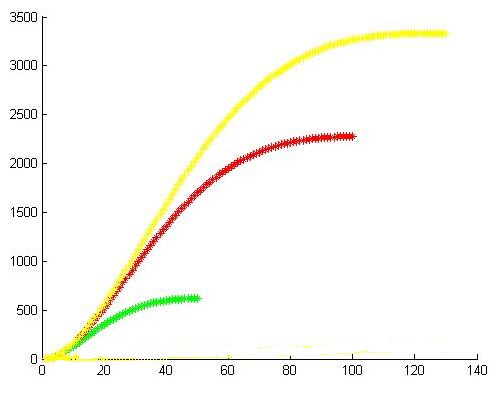}}
    \caption{The curves $S_N(k)$ for $N=50,100,130$ with (a) $q=0.3$; (b) $q=0.5$; (c) $q=0.95$; (d) $q=1$.}
    \label{fig-sums}
    \end{center}
\end{figure}


\end{document}